\theoremstyle{definition}
\newtheorem{defn}{Definition}[section]
\newtheorem{rem}[defn]{Remark}
\theoremstyle{plain}
\newtheorem{thm}[defn]{Theorem}
\newtheorem{prob}[defn]{Problem}
\numberwithin{equation}{section}
\title[Small generating set for the twist subgroup]{A small generating set for the twist subgroup of the mapping class group of a non-orientable surface by Dehn twists}
\author[G.~Omori]{Genki Omori}
\address{
(Genki Omori)
Department of Mathematics,
Tokyo Institute of Technology,
Oh-okayama, Meguro, Tokyo 152-8551, Japan
}
\email{omori.g.aa@m.titech.ac.jp}
\date{\today}
\begin{document}
%\rm
\maketitle
%\tableofcontents
\begin{abstract}
We give a small generating set for the twist subgroup of the mapping class group of a non-orientable surface by Dehn twists. The difference between the number of the generators and a lower bound of numbers of generators for the twist subgroup by Dehn twists is one. The lower bounds is obtained from an argument of Hirose~\cite{Hirose}. 
\end{abstract}

\section{Introduction}

Let $\Sigma _{g,n}$ be a compact connected oriented surface of genus $g\geq 0$ with $n\geq 0$ boundary components and we put $\Sigma _g:=\Sigma _{g,0}$. The {\it mapping class group} $\mathcal{M}(\Sigma _{g,n})$ of $\Sigma _{g,n}$ is the group of isotopy classes of orientation preserving self-diffeomorphisms on $\Sigma _{g,n}$ fixing the boundary pointwise. Dehn~\cite{Dehn} proved that $\mathcal{M}(\Sigma _g)$ is generated by $2g(g-1)$ Dehn twists. The generating set includes Dehn twists along separating simple closed curves. Mumford~\cite{Mumford} showed that $\mathcal{M}(\Sigma _g)$ is generated by Dehn twists along non-separating simple closed curves, and Lickorish~\cite{Lickorish2} gave a finite generating set for $\mathcal{M}(\Sigma _g)$ by $3g-1$ Dehn twists along non-separating simple closed curves. By an argument in Proof of Theorem~4.13 in \cite{Farb-Margalit}, $\mathcal{M}(\Sigma _{g,1})$ is also generated by $3g-1$ Dehn twists along non-separating simple closed curves. After that, Humphries~\cite{Humphries} proved that $\mathcal{M}(\Sigma _{g,n})$ is generated by a subset of Lickorish's generating set whose cardinality is $2g+1$ for $g\geq 2$ and $n\in \{ 0,1\}$, and he also proved that the generating set is minimal in generating sets for $\mathcal{M}(\Sigma _{g,n})$ by Dehn twists.

Let $N_{g,n}$ be a compact connected non-orientable surface of genus $g\geq 1$ with $n\geq 0$ boundary components. The surface $N_g:=N_{g,0}$ is a connected sum of $g$ real projective planes. The mapping class group $\mathcal{M}(N_{g,n})$ of $N_{g,n}$ is the group of isotopy classes of self-diffeomorphisms on $N_{g,n}$ fixing the boundary pointwise. For $n\in \{ 0,1\}$, $\mathcal{M}(N_{1,n})$ is the trivial group (see \cite[Theorem~3.4]{Epstein}). 
For $g\geq 2$, Lickorish proved that $\mathcal{M}(N_g)$ is not generated by Dehn twists in \cite{Lickorish1}, and $\mathcal{M}(N_{g,n})$ is generated by Dehn twists and a ``Y-homeomorphism" in \cite{Lickorish1, Lickorish3}.
The Y-homeomorphism is introduced by Lickorish in \cite{Lickorish1}.
Lickorish~\cite{Lickorish1} also showed that $\mathcal{M}(N_2)$ is generated by a Dehn twist and a Y-homeomorphism. In generally, Chilling worth~\cite{Chillingworth} gave a finite generating set for $\mathcal{M}(N_g)$ which consists of $\frac{3g-5}{2}$ (resp. $\frac{3g-6}{2}$) Dehn twists and a Y-homeomorphism for odd (resp. even) $g$. After that, Szepietowski~\cite{Szepietowski} proved that $\mathcal{M}(N_g)$ is generated by a subset of Chillingworth's generating set which consists of $g$ Dehn twists and a Y-homeomorphism, and Hirose~\cite{Hirose} showed that the generating set is minimal in generating sets for $\mathcal{M}(N_g)$ by Dehn twists and Y-homeomorphisms. By Stukow's finite presentation for $\mathcal{M}(N_{g,1})$ in \cite{Stukow1} and an argument in \cite{Hirose} (see Remark~\ref{remark}), $\mathcal{M}(N_{g,1})$ also has a minimal generating set by Dehn twists and Y-homeomorphisms which consists of $g$ Dehn twists and a Y-homeomorphism.

The {\it twist subgroup} $\mathcal{T}(N_{g,n})$ {\it of} $\mathcal{M}(N_{g,n})$ is the subgroup of $\mathcal{M}(N_{g,n})$ which is generated by all Dehn twists. $\mathcal{T}(N_{g,n})$ is an index 2 subgroup of $\mathcal{M}(N_{g,n})$ (see \cite{Lickorish3} and \cite[Corollary~6.4]{Stukow0}). In particular, $\mathcal{T}(N_{g,n})$ is finitely generated. Chillingworth~\cite{Chillingworth} showed that $\mathcal{T}(N_g)$ is generated by a Dehn twist for $g=2$, two Dehn twists for $g=3$, $\frac{3g-1}{2}$ Dehn twists for the other odd $g$ and $\frac{3g}{2}$ Dehn twists for the other even $g$. By an argument as in \cite{Humphries}, we can reduce the number of Chillingworth's generators to $g+2$ for odd $g>3$ and $g+3$ for even $g>3$. For $n\in \{ 0,1\}$, Stukow~\cite{Stukow2} gave a finite presentation for $\mathcal{T}(N_{g,n})$ whose generators are $g+2$ Dehn twists essentially by relations of the presentation (see Proof of Theorem~\ref{mainthm}). 

In this paper we proved that $\mathcal{T}(N_{g,n})$ is generated by $g+1$ Dehn twists for $g\geq 4$ (Theorem~\ref{mainthm}). The generating set is a proper subset of the generating set of Stukow's finite presentation in \cite{Stukow2}. By applying Hirose's argument in \cite{Hirose}, the difference between the number of the generators in Theorem~\ref{mainthm} and a lower bound of numbers of generators for $\mathcal{T}(N_{g,n})$ by Dehn twists is one (see Remark~\ref{remark}). The author does not know whether the generating set for $\mathcal{T}(N_{g,n})$ in Theorem~\ref{mainthm} is minimal in generating sets for $\mathcal{T}(N_{g,n})$ by Dehn twists or not.

\section{Preliminaries}\label{Preliminaries}

For a two-sided simple closed curve $c$ on $N_{g,n}$, we take an orientation of the regular neighborhood of $c$ in $N_{g,n}$. Then we denote by $t_c$ the right-handed Dehn twist along $c$ with respect to the orientation. In particular, for a given explicit two-sided simple closed curve, an arrow on a side of the simple closed curve indicates the direction of the Dehn twist (see Figure~\ref{dehntwist}).

\begin{figure}[h]
\includegraphics[scale=0.65]{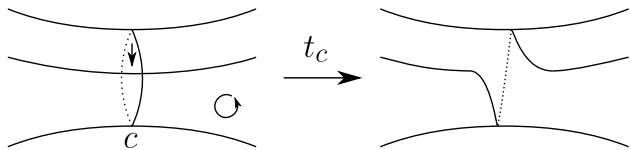}
\caption{The right-handed Dehn twist $t_c$ along a two-sided simple closed curve $c$ on $N_{g,n}$.}\label{dehntwist}
\end{figure}

Let $e_i:D \hookrightarrow \Sigma _0$ for $i=1$, $2, \dots $, $g+1$ be smooth embeddings of the unit disk $D$ to a 2-sphere $\Sigma _0$ such that $D_i:=e_i(D)$ and $D_j$ are disjoint for distinct $1\leq i,j\leq g+1$. Then we take a model of $N_g$ (resp. $N_{g,1}$) as the surface obtained from $\Sigma _0-{\rm int}(D_1\sqcup \cdots \sqcup D_g)$ (resp. $\Sigma _0-{\rm int}(D_1\sqcup \cdots \sqcup D_{g+1})$) by identifying antipodal points of the boundary components of $D_1,\dots ,D_g$ and we describe the identification of $\partial D_i$ by the x-mark as in Figure~\ref{sccs}. 

For $n\in \{ 0,1\}$, we denote by $\alpha _1$, $\dots $, $\alpha _{g-1}$ and $\beta $ two-sided simple closed curves on $N_{g,n}$ as in Figure~\ref{sccs}, and denote by $\gamma $, $\varepsilon $, $\zeta $ and $\psi $ two-sided simple closed curves on $N_{g,n}$ as in Figure~\ref{sccs2}, respectively. Then we define $a_i:=t_{\alpha _i}$ $(i=1,\dots ,g-1)$, $b:=t_\beta $, $e:=t_\varepsilon $, $f:=t_\zeta $, $y^2:=t_\psi $ and $c:=t_\gamma $.

\begin{figure}[h]
\includegraphics[scale=0.7]{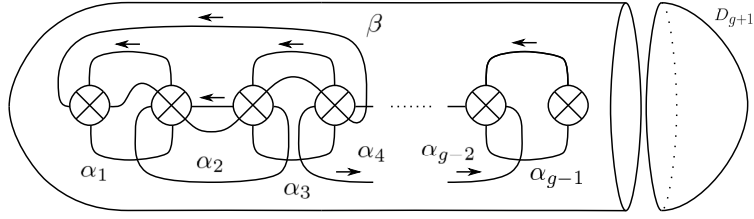}
\caption{Simple closed curves $\alpha _1$, $\dots $, $\alpha _{g-1}$ and $\beta $ on $N_{g,n}$.}\label{sccs}
\end{figure}

\begin{figure}[h]
\includegraphics[scale=0.6]{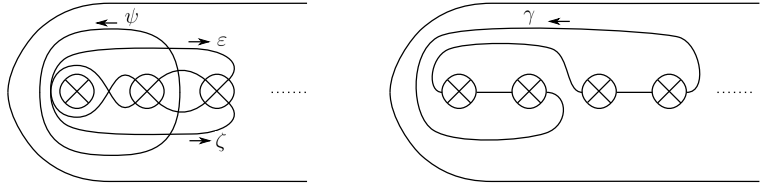}
\caption{Simple closed curves $\psi $, $\varepsilon $, $\zeta $ and $\gamma $ on $N_{g,n}$.}\label{sccs2}
\end{figure}

\section{Main result}\label{section_mainthm}

The main theorem in this paper is as follows.
\begin{thm}\label{mainthm}
For $g\geq 4$ and $n\in \{ 0,1\}$, $\mathcal{T}(N_{g,n})$ is generated by $a_1$, $\dots $, $a_{g-1}$, $b$ and $e$. In particular, $\mathcal{T}(N_{g,n})$ is generated by $g+1$ Dehn twists along non-separating simple closed curves.
\end{thm}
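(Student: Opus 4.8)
The plan is to start from Stukow's finite presentation for $\mathcal{T}(N_{g,n})$ in \cite{Stukow2}, whose generating set consists of the $g+2$ Dehn twists $a_1,\dots,a_{g-1},b,e$ together with one additional Dehn twist — call it the ``extra'' generator, which in the notation of Figures~\ref{sccs} and \ref{sccs2} is one of $c=t_\gamma$, $f=t_\zeta$ or $y^2=t_\psi$. Since we already know this set generates $\mathcal{T}(N_{g,n})$, it suffices to prove that the extra generator lies in the subgroup $G:=\langle a_1,\dots,a_{g-1},b,e\rangle$. Thus the whole theorem reduces to a single word problem: express one specific Dehn twist as a product of $a_1,\dots,a_{g-1},b,e$ and their inverses.

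The key tool will be the standard Dehn-twist identities together with the chain relations and the lantern relation, all of which hold in $\mathcal{T}(N_{g,n})$ exactly as in the orientable case for the relevant two-sided curves. First I would fix convenient configurations of the curves $\alpha_i,\beta,\varepsilon,\zeta,\gamma,\psi$ on the model of $N_{g,n}$ built from $\Sigma_0 - \mathrm{int}(D_1\sqcup\cdots)$, paying close attention to orientations of regular neighborhoods so that the right-handedness of each twist is unambiguous; this bookkeeping is what makes or breaks such an argument. Then I would look for a subsurface of $N_{g,n}$ — embedded copies of $\Sigma_{0,3}$ or $\Sigma_{0,4}$, or a chain of curves $\alpha_1,\dots,\alpha_k,\beta$ — in which the extra generator appears together with curves whose twists are already in $G$. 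The lantern relation $t_{x_1}t_{x_2}t_{x_3}t_{x_4}=t_{y_1}t_{y_2}t_{y_3}$ on a four-holed sphere is the natural candidate for ``trading in'' the extra twist: by arranging that six of the seven curves involved have twists in $G$, the seventh is forced into $G$ as well. Braid relations and the commuting relations among disjoint curves will be used freely to move curves into the required position and to rewrite the product.

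Concretely, I expect the argument to run as follows: (i) record the elementary relations among $a_1,\dots,a_{g-1},b,e$ and the extra generator that follow from the geometric intersection pattern (which pairs commute, which satisfy braid relations); (ii) identify an embedded four-holed sphere (or a suitable chain) realizing a lantern (or chain) relation that involves the extra generator and six (resp. two or three) curves whose twists are visibly in $G$ — here the hypothesis $g\geq 4$ is what guarantees enough room on the surface to find such a configuration, and this is presumably why $g=2,3$ are excluded; (iii) solve for the extra generator in that relation, obtaining an explicit word in $a_1,\dots,a_{g-1},b,e$; (iv) conclude $G=\mathcal{T}(N_{g,n})$ and observe that all of $\alpha_1,\dots,\alpha_{g-1},\beta,\varepsilon$ are non-separating, giving the final sentence. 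The treatment of the bounded case $n=1$ should be essentially the same as $n=0$, since Stukow's presentation covers both; at worst one extra relation from Stukow's list handles the boundary-parallel curve.

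The main obstacle I anticipate is step (ii): finding the right auxiliary curves and the right relation so that the extra generator is expressed \emph{only} in terms of $a_1,\dots,a_{g-1},b,e$, with no stray twist left over. It may well be necessary to chain together two or more lantern/chain relations, introducing intermediate twists $t_\delta$ that are themselves first shown to lie in $G$ (e.g. as images of $\alpha_i$ or $\beta$ under words already in $G$, using $w t_c w^{-1}=t_{w(c)}$). Keeping the orientations consistent through these successive substitutions — so that every ``right-handed'' twist really is right-handed with respect to a coherent choice — will be the delicate part of the verification, and is where I would spend the most care.
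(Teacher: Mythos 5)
Your overall reduction is the right one and matches the paper's: start from Stukow's presentation for $\mathcal{T}(N_{g,n})$ and show that every redundant twist generator lies in $G:=\langle a_1,\dots,a_{g-1},b,e\rangle$. But the proposal stops exactly where the real work begins, and it miscounts what has to be eliminated. Stukow's generating set contains not one but three extra Dehn twists beyond $a_1,\dots,a_{g-1},b,e$, namely $f=t_\zeta$, $y^2=t_\psi$ and $c=t_\gamma$ (plus further generators for even $g\geq 6$ and for $n=0$, all of which are products of elements of $X$ by explicit relations of the presentation). The paper disposes of $y^2$ and $c$ not by any geometric construction but by quoting the relations ($\overline{\text{B2}_1}$) and ($\overline{\text{B6}_1}$) of Stukow's presentation, which express them as words in the remaining generators. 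This is not a cosmetic point: $\psi$ is not in the mapping class group orbit of $\varepsilon$ or the $\alpha_i$, so the conjugation trick $wt_cw^{-1}=t_{w(c)}^{\pm 1}$ that you mention in passing cannot remove $y^2$; one genuinely needs the relations of the presentation there, and your plan does not provide a substitute.

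For the one elimination that is actually new — that of $f$ — the paper uses no lantern or chain relation at all. It exhibits the explicit word $w=a_3^{-1}a_2^{-1}ba_1^{-1}a_2^{-1}a_3^{-1}$, verifies on the model surface that $w(\varepsilon)=\zeta$ with the local orientation of the regular neighborhood reversed, and concludes $f=we^{-1}w^{-1}$. Your step (ii) — locating an embedded four-holed sphere realizing a lantern relation with six of the seven twists already in $G$ — is precisely the content you acknowledge not having carried out, and it points toward a more complicated mechanism than the one that works. As written, the proposal is a research plan with the decisive computation missing, so it does not yet constitute a proof of the theorem.
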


\begin{proof}
Assume $g\geq 4$ and $n\in \{ 0,1\}$. Stukow's presentation for $\mathcal{T}(N_{g,n})$ in \cite{Stukow2} has the following generating set: 
\begin{itemize}
\item $X:=\{a_1, \dots , a_{g-1}, b, e, f, y^2, c\}$ \hspace{0.2cm} for odd $g$ and $n=1$ or $g=4$ and $n=1$,
\item $X^\prime :=X\cup \{ b_0,b_1, \dots , b_{\frac{g-2}{2}}, \bar{b}_{\frac{g-6}{2}}, \bar{b}_{\frac{g-4}{2}}, \bar{b}_{\frac{g-2}{2}}\}$ \hspace{0.2cm} for even $g\geq 6$ and $n=1$,
\item $X\cup\{ \rho \}$ \hspace{0.2cm} for odd $g$ and $n=0$,
\item $X \cup \{ \bar{\rho }\}$ \hspace{0.2cm} for $g=4$ and $n=0$,
\item $X^\prime \cup \{ \bar{\rho }\}$ \hspace{0.2cm} for even $g\geq 6$ and $n=0$.
\end{itemize}
$b_0$, $b_1$, $\dots $, $b_{\frac{g-2}{2}}$, $\bar{b}_{\frac{g-6}{2}}$, $\bar{b}_{\frac{g-4}{2}}$, $\bar{b}_{\frac{g-2}{2}}$, $\rho $ and $\bar{\rho }$ are products of elements in $X$ by the relations~(A7), (A8), ($\overline{\text{A7a}}$)-($\overline{\text{A8b}}$), (C1a) and ($\overline{\text{C4}}$) in Theorem~2.1, Theorem~2.2, Theorem~3.1 and Theorem~3.2 of \cite{Stukow2}. Thus $\mathcal{T}(N_{g,n})$ is generated by $X$. 
By the relation~($\overline{\text{B2}_1}$) in Theorem~3.1 of \cite{Stukow2}, $y^2$ is a product of elements in $X-\{ y^2\}$, and by the relation~($\overline{\text{B6}_1}$) in Theorem~3.1 of \cite{Stukow2}, $c$ is a product of $a_1$, $\dots $, $a_{g-1}$, $b$, $e$ and $f$.

Finally, we can check $a_3^{-1}a_2^{-1}ba_1^{-1}a_2^{-1}a_3^{-1}(\varepsilon )=\zeta $ and the orientation of the regular neighborhood of $a_3^{-1}a_2^{-1}ba_1^{-1}a_2^{-1}a_3^{-1}(\varepsilon )$ is different from one of $f$ as in Figure~\ref{proof}. Hence we have $f=(a_3^{-1}a_2^{-1}ba_1^{-1}a_2^{-1}a_3^{-1})e^{-1}(a_3^{-1}a_2^{-1}ba_1^{-1}a_2^{-1}a_3^{-1})^{-1}$. Therefore $\mathcal{T}(N_{g,n})$ is generated by $a_1$, $\dots $, $a_{g-1}$, $b$ and $e$, and we have completed the proof of Theorem~\ref{mainthm}.
\end{proof}

\begin{figure}[h]
\includegraphics[scale=0.6]{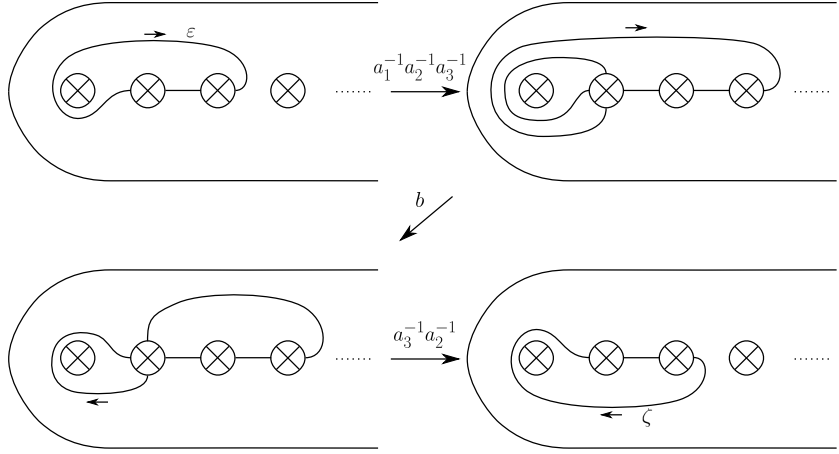}
\caption{Proving that $a_3^{-1}a_2^{-1}ba_1^{-1}a_2^{-1}a_3^{-1}(\varepsilon )=\zeta $.}\label{proof}
\end{figure}

\begin{rem}\label{remark0}
The regular neighborhood $N$ of the union of $\alpha _1$, $\dots $, $\alpha _{g-1}$ is an orientable subsurface of $N_{g,n}$ and $\{ a_1, \dots , a_{g-1}, b\}$ is the minimal generating set for $\mathcal{M}(N)$ by Dehn twists which is given by Humphries~\cite{Humphries}. Remark that $N_{g,n}-{\rm int}N$ is not a disjoint union of disks, and an element of the subgroup of $\mathcal{T}(N_{g,n})$ which is generated by $a_1$, $\dots $, $a_{g-1}$ and $b$ is represented by a diffeomorphism of $N_{g,n}$ whose restriction to $N_{g,n}-{\rm int}N$ is the identity map. However, $e$ does not fix $N_{g,n}-{\rm int}N$ up to ambient isotopies of $N_{g,n}$. Hence $\mathcal{T}(N_{g,n})$ is not generated by $a_1$, $\dots $, $a_{g-1}$ and $b$. Define $X_0:=\{ \alpha _1, \dots , \alpha _{g-1}, b, \varepsilon \}$. For $x_0\in \{ \alpha _4, \dots , \alpha _{g-1}, \varepsilon \}$, the complement $N_{g,n}-\displaystyle \bigcup _{x\in X_0\setminus \{ x_0\}}x$ has a non-disk component. Thus $\mathcal{T}(N_{g,n})$ is not also generated by $X_0-\{ x_0\}$ for $x_0\in \{ \alpha _4, \dots , \alpha _{g-1}, \varepsilon \}$.  
\end{rem}

\begin{rem}\label{remark}
We can apply Hirose's argument in \cite{Hirose} to $\mathcal{M}(N_{g,1})$ and $\mathcal{T}(N_{g,n})$ for $g\geq 4$ and $n\in \{ 0,1\}$. However, we should note that he take $\phi _j\in \mathcal{M}(N_{g,n})$ such that $\phi _j(c_1)=\gamma _j$ in the proof of Lemma~6 in \cite{Hirose}. To apply Hirose's argument in \cite{Hirose} to $\mathcal{T}(N_{g,n})$, we must take such $\phi _j$ as an element of $\mathcal{T}(N_{g,n})$. By using Lemma~7.2 in \cite{Stukow0}, we can take $\phi _j$ as an element of $\mathcal{T}(N_{g,n})$. Therefore the minimum number of generators for $\mathcal{T}(N_{g,n})$ by Dehn twists is at least $g$ for $g\geq 4$ and $n\in \{ 0,1\}$, and the difference between the number of the generators for $\mathcal{T}(N_{g,n})$ in Theorem~\ref{mainthm} and the lower bound of numbers of generators for $\mathcal{T}(N_{g,n})$ by Dehn twists is one.
\end{rem}

Finally we raise the following problem.

\begin{prob}
Which of $g$ and $g+1$ is the minimum number of generators for $\mathcal{T}(N_{g,n})$ by Dehn twists when $g\geq 4$ and $n\in \{ 0,1\}$? 
\end{prob}

\par
{\bf Acknowledgements: } The author would like to express his gratitude to Hisaaki Endo, for his encouragement and helpful advices. The author also wish to thank Susumu Hirose for his comments and helpful advices. The author was supported by JSPS KAKENHI Grant number 15J10066.

\end{document}